\newtheorem{thm}{Theorem}[section]
\newtheorem{cor}[thm]{Corollary}
\newtheorem{lem}[thm]{Lemma}
\theoremstyle{remark}
\newtheorem{rem}[thm]{Remark}
\numberwithin{equation}{section}
\title[Superconvexity of the Heat Kernel on Hyperbolic Space]{Superconvexity of the Heat Kernel on Hyperbolic Space with Applications to Mean Curvature Flow}
\author{Yongzhe Zhang}
\address{Department of Mathematics, California Institute of Technology, 1200 E. California Blvd, Pasadena, CA 91106}
\email{yongzhe@caltech.edu}
\subjclass[2010]{Primary 35K08, 58J35, 53C44; Secondary 35K93}
\keywords{superconvexity, heat kernel, hyperbolic space, mean curvature flow}
\thanks{The author was partially supported by the NSF grants DMS-2018220 and DMS-2018221.} 
\begin{document}

\begin{abstract}
We prove a conjecture of Bernstein that the superconvexity of the heat kernel on hyperbolic space holds in all dimensions and, hence, there is an analog of Huisken's monotonicity formula for mean curvature flow in hyperbolic space of all dimensions. 
\end{abstract}

\maketitle

\section{Introduction} \label{IntroSec}
Throughout the paper, let $\mathbb{H}^n$ be the hyperbolic space of dimension $n$ and let $H_n(t,p;t_0,p_0)$ be the heat kernel on $\mathbb{H}^n$ with singularity at $p=p_0$ at time $t=t_0$. Thus, $H_n$ is the unique positive solution to 
$$
\left\{\begin{array}{ll} \left(\frac{\partial}{\partial t}-\Delta_{\mathbb{H}^n}\right) H_n=0 & \mbox{for $t>t_0$,} \\ \lim_{t\downarrow t_0} H_n=\delta_{p_0}. & \end{array}\right.
$$
By the symmetries of $\mathbb{H}^n$, there is a positive function $K_n(t,\rho)$ on $(0,\infty)\times(0,\infty)$ such that 
$$
H_n(t,p;t_0,p_0)=K_n(t_0-t,\mathrm{dist}_{\mathbb{H}^n}(p,p_0))>0
$$
where $\rho=\mathrm{dist}_{\mathbb{H}^n}(p,p_0)$ is the hyperbolic distance between $p$ and $p_0$. As remarked in \cite{Bernstein}, although $K_n$ can be explicitly computed, the formulas become unmanageable for large $n$; see \cite{DM} for more details.

In this short note we use observations from \cite{DM} and \cite{YZh} to prove the following convexity estimate for $K_n$.

\begin{thm} \label{MainThm}
If $\sigma=\cosh\rho$, then $\log K_n$ is superconvex in $\sigma$, i.e., for any $t>0$ and $\rho>0$,
\begin{equation} \label{SuperconvexEqn}
\frac{\partial^2}{\partial\sigma^2}\log K_n=\left(\frac{1}{\sinh\rho}\frac{\partial}{\partial\rho}\right)^2\log K_n>0.
\end{equation}
\end{thm}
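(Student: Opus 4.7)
The plan is to show that for each fixed $\tau > 0$, the map $\sigma \mapsto K_n(\tau, \rho)$ (with $\sigma = \cosh\rho$) is completely monotone on $(1, \infty)$, and then to deduce \eqref{SuperconvexEqn} by representing $K_n$ as a Laplace transform and invoking the Cauchy--Schwarz inequality.

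The starting point is the classical dimension-lowering recursion for heat kernels on hyperbolic space used in \cite{DM}: writing $D_\sigma = \frac{1}{\sinh\rho}\frac{\partial}{\partial\rho} = \frac{\partial}{\partial\sigma}$, one has
\[
K_{n+2}(\tau, \rho) = -\frac{e^{-n\tau}}{2\pi}D_\sigma K_n(\tau, \rho).
\]
Iterating this identity yields $(-1)^k D_\sigma^k K_n = (2\pi)^k e^{(kn + k(k-1))\tau} K_{n+2k} > 0$ for every $k \geq 0$, which is exactly the statement that $\sigma \mapsto K_n(\tau, \cdot)$ is completely monotone on $(1, \infty)$. Bernstein's theorem, applied after the shift $\sigma \mapsto \sigma - 1$ (using that $K_n(\tau, 0)$ is finite for $\tau > 0$), then produces a positive Borel measure $\nu = \nu_{n,\tau}$ on $[0, \infty)$ with
\[
K_n(\tau, \rho) = \int_0^\infty e^{-s(\sigma - 1)}\, d\nu(s).
\]

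Differentiating under the integral and applying the Cauchy--Schwarz inequality to the positive measure $d\nu_\sigma(s) := e^{-s(\sigma - 1)}\, d\nu(s)$ then gives
\[
K_n\, D_\sigma^2 K_n - (D_\sigma K_n)^2 = \int d\nu_\sigma \int s^2\, d\nu_\sigma - \Bigl(\int s\, d\nu_\sigma\Bigr)^{\!2} \geq 0,
\]
which is equivalent to $D_\sigma^2 \log K_n \geq 0$.

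The only subtle point I expect is upgrading this to the strict inequality in \eqref{SuperconvexEqn}. Equality in Cauchy--Schwarz would force $\nu$ to be supported at a single point $s_0$, so that $K_n(\tau, \rho) = c(\tau) e^{-s_0(\tau)(\sigma - 1)}$ for some $c,s_0$. Plugging this ansatz into the radial hyperbolic heat equation $\partial_\tau K_n = (\sigma^2 - 1) D_\sigma^2 K_n + n\sigma D_\sigma K_n$ and matching the coefficient of $\sigma^2$ on both sides forces $s_0 \equiv 0$, leaving $K_n$ independent of $\rho$ and contradicting the delta-function initial condition. This rules out equality and establishes the strict inequality in \eqref{SuperconvexEqn}.
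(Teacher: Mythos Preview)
Your approach is genuinely different from the paper's and, for the non-strict inequality, considerably cleaner. The paper reduces to odd $n$ via \cite[Proposition~2.1]{Bernstein}, writes $K_n=(4\pi t)^{-n/2}e^{-(n-1)^2t/4}e^{-\rho^2/(4t)}\alpha_n$, and shows $\partial_\sigma^2\log\alpha_n\ge 0$ by expanding $\alpha_{2m+1}$ as a polynomial with nonnegative coefficients in the functions $f_l$, then using Lemma~\ref{YZhLem} (the monotonicity of $-f_{l+1}/f_l$) together with a completing-the-square argument. Strict positivity comes for free from the Gaussian factor, since $\sinh^2\!\rho\,\partial_\sigma^2\log(e^{-\rho^2/(4t)})=(\rho\coth\rho-1)/(2t)>0$. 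Your route via the Millson recursion, complete monotonicity in $\sigma$, Bernstein's representation theorem, and Cauchy--Schwarz bypasses all of this structure and treats even and odd $n$ uniformly; the paper's argument, by contrast, isolates exactly where the strict inequality lives.

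There is, however, a real gap in your strictness step. Equality in Cauchy--Schwarz at a single point $(\tau_0,\rho_0)$ forces $\nu_{n,\tau_0}$ to be a Dirac mass, hence $K_n(\tau_0,\rho)=c_0\,e^{-s_0(\sigma-1)}$ for all $\rho$ \emph{at that one time} $\tau_0$. It does \emph{not} give you the ansatz $K_n(\tau,\rho)=c(\tau)e^{-s_0(\tau)(\sigma-1)}$ on any open $\tau$-interval, so you cannot legitimately compute $\partial_\tau K_n$ from the ansatz and match the $\sigma^2$ coefficient against the right-hand side of the heat equation. As written, the argument assumes more than has been established.

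The gap is easily repaired without the heat equation. First, $s_0=0$ is impossible: it would give $D_\sigma K_n(\tau_0,\cdot)\equiv 0$, hence $K_{n+2}(\tau_0,\cdot)\equiv 0$ by the Millson identity, contradicting positivity. For $s_0>0$, compare large-$\rho$ behaviour: the Davies--Mandouvalos bounds \cite{DM} give $-\log K_n(\tau_0,\rho)\sim \rho^2/(4\tau_0)$ as $\rho\to\infty$, whereas $-\log\bigl(c_0 e^{-s_0(\cosh\rho-1)}\bigr)\sim \tfrac{s_0}{2}e^{\rho}$, which is a different order of growth. This rules out the exponential form at any fixed $\tau_0$ and yields the strict inequality.
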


Observe that by the chain rule \eqref{SuperconvexEqn} is equivalent to
\begin{equation} \label{EquivSuperconvexEqn}
\partial_\rho^2 \log K_n(t,\rho)-\coth(\rho)\partial_\rho \log K_n(t,\rho)>0.
\end{equation}
In \cite{Bernstein}, Bernstein proved \eqref{EquivSuperconvexEqn} for small $n$ and conjectured it for all $n$. Hence we confirm this conjecture in Theorem \ref{MainThm}. 

We also give an application of Theorem \ref{MainThm} to the mean curvature flow in hyperbolic space. We say an $n$-dimensional submanifold $\Sigma\subset\mathbb{H}^{n+k}$ has \emph{exponential volume growth}, provided that there is a constant $M>0$ and a point $p_0\in\mathbb{H}^{n+k}$ so that for any $R>0$
$$
Vol_{\mathbb{H}^{n+k}}(\Sigma\cap B_R^{\mathbb{H}^{n+k}}(p_0)) \leq M e^{MR}
$$
where $B_R^{\mathbb{H}^{n+k}}(p_0)$ is the (open) geodesic ball in $\mathbb{H}^{n+k}$ centered at $p_0$ with radius $R$. As noted in \cite[Remark 1.2]{Bernstein}, we can use Theorem \ref{MainThm} to extend \cite[Theorem 1.1]{Bernstein}, an analog of Huisken's monotonicity formula \cite{Huisken} for mean curvature flow in hyperbolic space in low dimensions, to higher dimensions.

\begin{cor} \label{MonotoneCor}
If $\{\Sigma_t\}_{t\in [0,T)}$ is a mean curvature flow of $n$-dimensional complete submanifolds $\Sigma_t\subset\mathbb{H}^{n+k}$ that have exponential volume growth, then, for $t_0\in (0,T]$, $p_0\in\mathbb{H}^{n+k}$ and $t\in (0,t_0)$,
$$
\frac{d}{dt} \int_{\Sigma_t} K_n(t_0-t,\mathrm{dist}_{\mathbb{H}^{n+k}}(p,p_0)) \, dVol_{\Sigma_t}(p) \leq 0
$$
and the inequality is strict unless $\Sigma_t$ is a minimal cone over $p_0$.
\end{cor}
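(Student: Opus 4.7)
The plan is to carry out a Huisken-style first-variation computation on $\Sigma_t \subset \mathbb{H}^{n+k}$ and manipulate it into a form to which Theorem~\ref{MainThm} applies. Concretely, my aim is to reach the identity
\[
\frac{d}{dt}\int_{\Sigma_t} K_n\,dVol_{\Sigma_t} = -\int_{\Sigma_t} K_n\left[\left|\mathbf{H} - (\nabla\log K_n)^\perp\right|^2 + \bigl(1-|\nabla^{\Sigma_t}\rho|^2\bigr)\sinh^2\rho\cdot \partial_\sigma^2\log K_n\right] dVol_{\Sigma_t},
\]
where $\mathbf{H}$ is the mean curvature vector of $\Sigma_t$, $\nabla$ is the ambient gradient on $\mathbb{H}^{n+k}$, $\nabla^{\Sigma_t}$ is the tangential gradient, and $(\cdot)^\perp$ denotes projection onto the normal bundle of $\Sigma_t$. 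The first bracketed term is a square, and Theorem~\ref{MainThm} makes the second term nonnegative (strictly so at any point where $|\nabla^{\Sigma_t}\rho|<1$), so the monotonicity $\frac{d}{dt}\int K_n\,dVol \le 0$ follows immediately.

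To derive the identity, I would first use $\partial_t F = \mathbf{H}$ and $\partial_t\,dVol_{\Sigma_t} = -|\mathbf{H}|^2\,dVol_{\Sigma_t}$ to obtain
\[
\frac{d}{dt}\int_{\Sigma_t} K_n\, dVol = \int_{\Sigma_t}\bigl[\partial_t K_n + \langle \nabla K_n,\mathbf{H}\rangle - |\mathbf{H}|^2 K_n\bigr]\,dVol.
\]
Setting $\phi = \log K_n$, $L = \partial_\rho\phi$, $M = \partial_\rho^2 K_n / K_n$, and $\alpha = |\nabla^{\Sigma_t}\rho|^2 \in [0,1]$, the fact that $K_n(\tau,\rho)$ solves the radial heat equation on $\mathbb{H}^n$ with $\tau = t_0-t$ gives $\partial_t K_n = -K_n\bigl(M + (n-1)\coth\rho\cdot L\bigr)$, while the ambient hyperbolic Hessian $\nabla^2 f(\rho) = f''\,d\rho\otimes d\rho + f'\coth\rho\,(g_{\mathbb{H}^{n+k}} - d\rho\otimes d\rho)$ yields $\operatorname{tr}_{\Sigma_t}\nabla^2 K_n = K_n\bigl[M\alpha + \coth\rho\cdot L(n-\alpha)\bigr]$. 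Completing the square,
\[
-|\mathbf{H}|^2 + L\langle\nabla\rho,\mathbf{H}\rangle = -\bigl|\mathbf{H}-(\nabla\phi)^\perp\bigr|^2 - L\langle\nabla\rho,\mathbf{H}\rangle + L^2(1-\alpha),
\]
and then using $\int_{\Sigma_t}\Delta_{\Sigma_t}K_n\,dVol = 0$ to dispose of the leftover cross term, the non-square pieces collapse to $-(1-\alpha)(M - L^2 - L\coth\rho)$. Since $M - L^2 = \phi''$ and, by the chain rule, $\partial_\sigma^2\phi = (\phi''-\phi'\coth\rho)/\sinh^2\rho$, this is precisely $-(1-\alpha)\sinh^2\rho\cdot\partial_\sigma^2\log K_n$, producing the identity above.

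The main technical obstacle is justifying $\int_{\Sigma_t}\Delta_{\Sigma_t}K_n\,dVol = 0$. Gaussian-type decay for $K_n$, with an additional $e^{-(n-1)\rho/2}$ factor reflecting the exponential growth of hyperbolic volumes (see \cite{DM}), dominates the exponential volume growth of $\Sigma_t$, so all relevant integrals converge absolutely; one then exhausts $\Sigma_t$ by compactly supported pieces inside geodesic balls of $\mathbb{H}^{n+k}$ and lets the radius tend to infinity, in parallel with \cite{Bernstein}. This step is standard but tedious. Finally, for the rigidity claim: if the derivative vanishes at some time, then strict positivity of $\partial_\sigma^2\log K_n$ from Theorem~\ref{MainThm} forces $\alpha \equiv 1$ along $\Sigma_t$, so $\nabla\rho$ is everywhere tangent to $\Sigma_t$ and hence $\Sigma_t$ is a geodesic cone over $p_0$; the vanishing of the first square then reads $\mathbf{H} = L(\nabla\rho)^\perp = 0$, so the cone is minimal.
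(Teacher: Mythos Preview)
Your proposal is correct and is precisely the computation the paper has in mind: the paper does not give a separate proof of Corollary~\ref{MonotoneCor} but simply observes that, by \cite[Remark~1.2]{Bernstein}, the monotonicity formula of \cite[Theorem~1.1]{Bernstein} extends to all dimensions once Theorem~\ref{MainThm} is established. Your Huisken-style first-variation computation, culminating in the pointwise identity with the two nonnegative terms $|\mathbf{H}-(\nabla\log K_n)^\perp|^2$ and $(1-|\nabla^{\Sigma_t}\rho|^2)\sinh^2\rho\,\partial_\sigma^2\log K_n$, is exactly that argument from \cite{Bernstein}, and your treatment of the rigidity case and of the integrability issues (via the Gaussian-type decay of $K_n$ against exponential volume growth) matches what is done there.
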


\begin{rem}
In \cite{Bernstein}, Bernstein introduced a notion of hyperbolic entropy for submanifolds in hyperbolic space, which is analogous to the one introduced by Colding-Minicozzi for hypersurfaces in Euclidean space \cite{CM}. Using Corollary \ref{MonotoneCor} and some observations of \cite{Bernstein}, one may adapt the arguments of \cite{BWInvent, BWDuke, BWGT, BWIsotopy,KZh,Zhu} to prove that closed hypersurfaces in hyperbolic space with small hyperbolic entropy are simple in various senses.
\end{rem}

\begin{rem}
Another consequence of Corollary \ref{MonotoneCor} is that the second part of \cite[Theorem 1.5]{Bernstein} (i.e., ``If, in addition, $\Sigma$ is minimal and $n<N$...") holds true for all dimensions $n$. Thus there is a natural relationship between the hyperbolic entropy of an asymptotic regular submanifold of hyperbolic space and the conformal volume of its asymptotic boundary, which is analogous to the relationship between the entropy of an asymptotically conical self-expander and the entropy of its asymptotic cone \cite[Lemma 3.5]{BWIMRN}.
\end{rem}

\section{Proof of Theorem \ref{MainThm}} \label{ProofSec}
Set $\sigma=\cosh\rho$. Let
$$
f_1(\rho)=\frac{\rho}{\sinh\rho} \quad\mbox{for $\rho>0$}
$$
and 
$$
f_{l+1}(\rho)=-\frac{df_{l}}{d\sigma}=(-1)^l\frac{d^l f_1}{d\sigma^l} \quad\mbox{for $\rho>0$}.
$$
It is shown in \cite{DM} that 
$$
K_n(t,\rho)=(4\pi t)^{-\frac{n}{2}} e^{-\frac{(n-1)^2}{4}t} e^{-\frac{\rho^2}{4t}} \alpha_n(t,\rho)
$$
and $\alpha_n(t,\rho)$ satisfies the following recurrence relation:
$$
\alpha_n=f_1\alpha_{n-2}-2t\frac{\partial\alpha_{n-2}}{\partial \sigma}.
$$
As $\mathbb{H}^1$ is the one-dimensional Euclidean space, we have
$$
K_1=(4\pi)^{-\frac{1}{2}} e^{-\frac{\rho^2}{4t}},
$$
so $\alpha_1=1$.

Using the definition of $f_l$ and the recurrence relation for $\alpha_n$, Davies and Mandouvalous prove the following properties for $f_l$ and $\alpha_n$ for odd $n$. 

\begin{lem}[{\cite{DM}}] \label{DMLem}
The following is true:
\begin{enumerate}
\item For each $l\geq 1$, $f_l$ is positive and decreasing.
\item For all $m\geq 1$, 
$$
\alpha_{2m+1}=\sum_{i=0}^{m-1} t^i P_{m,i}(f_1,f_2,\dots,f_m)>0
$$ 
where $P_{m,i}$ are all polynomials with nonnegative coefficients.
\end{enumerate}
\end{lem}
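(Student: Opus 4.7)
My plan is to split the lemma into its two claims and to dispatch (2) first (conditional on (1)), as it is essentially formal once (1) is in hand. The genuinely nontrivial content lies in (1).

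For (2), I would induct on $m$. The base case $m=1$ is immediate from the recurrence: since $\alpha_1 = 1$, we have $\alpha_3 = f_1 \alpha_1 - 2t\,\partial_\sigma \alpha_1 = f_1$, so we may take $P_{1,0} = f_1$. For the inductive step, assume $\alpha_{2m-1} = \sum_{i=0}^{m-2} t^i P_{m-1,i}(f_1,\dots,f_{m-1})$ with each $P_{m-1,i}$ a polynomial in the $f_l$ having nonnegative coefficients. The key observation is that, because $\partial_\sigma f_l = -f_{l+1}$ by definition,
$$-2t\, \partial_\sigma P(f_1, \dots, f_{m-1}) = 2t \sum_{l=1}^{m-1} \frac{\partial P}{\partial f_l}(f_1,\dots,f_{m-1})\, f_{l+1},$$
so $-2t\,\partial_\sigma$ sends a $t^i$-monomial with nonnegative-coefficient polynomial in $f_1, \dots, f_{m-1}$ to a $t^{i+1}$-monomial with a nonnegative-coefficient polynomial in $f_1, \dots, f_m$. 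Plugging into $\alpha_{2m+1} = f_1 \alpha_{2m-1} - 2t\,\partial_\sigma \alpha_{2m-1}$ and reindexing in $t$ yields the claimed form, with leading coefficient $P_{m,0} = f_1^m$; strict positivity then follows from (1).

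For (1), since $f_{l+1} = -df_l/d\sigma$, the assertion that every $f_l$ is positive and decreasing is equivalent to the statement that $f_1$, viewed as a function of $\sigma \in (1,\infty)$, is \emph{completely monotone}. My plan is to prove this by establishing the integral representation
$$f_1(\sigma) = \frac{\cosh^{-1}\sigma}{\sqrt{\sigma^2-1}} = \int_0^\infty \frac{dt}{\sigma + \cosh t},$$
which I would verify via the substitution $u = e^t$, reducing the right-hand side to $\int_1^\infty 2\,du/(u^2 + 2\sigma u + 1)$, and then evaluating by partial fractions (a half-angle identity using $\sigma = \cosh\rho$ collapses the resulting logarithm to $\rho$). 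Differentiating $k$ times under the integral sign yields $d^k f_1/d\sigma^k = (-1)^k k! \int_0^\infty dt/(\sigma + \cosh t)^{k+1}$, so
$$f_l(\sigma) = (l-1)!\int_0^\infty \frac{dt}{(\sigma + \cosh t)^l} > 0,$$
and decreasing is then $df_l/d\sigma = -f_{l+1} < 0$.

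The main obstacle I expect is finding the integral representation itself: without it, one would have to control the sign of the $k$-th $\sigma$-derivative of $\cosh^{-1}\sigma/\sqrt{\sigma^2 - 1}$ directly, whose repeated chain-rule expansions involve products of powers of $(\sigma^2 - 1)^{-1/2}$ with polynomials in $\sigma$ whose sign structure is not transparent from a naive induction. Any route that bypasses this representation would likely require an equivalent global device, such as a Bernstein/Laplace-transform characterization of complete monotonicity.
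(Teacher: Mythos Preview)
The paper does not prove this lemma; it is quoted from \cite{DM} without argument, so there is no in-paper proof to compare against.

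Your proposal is correct on both parts. Part (2) is the natural induction on $m$ via $\alpha_{2m+1}=f_1\alpha_{2m-1}-2t\,\partial_\sigma\alpha_{2m-1}$, using that $-\partial_\sigma$ sends nonnegative-coefficient polynomials in $f_1,\dots,f_{m-1}$ to ones in $f_1,\dots,f_m$ (since $\partial_\sigma f_l=-f_{l+1}$); this is exactly how the $P_{m,i}$ are generated in \cite{DM}, and $P_{m,0}=f_1^m$ gives strict positivity once (1) is known. For (1), your integral representation checks out: with $u=e^t$ the integral becomes $\int_1^\infty 2\,du/(u^2+2\sigma u+1)$, the factorization $u^2+2\sigma u+1=(u+e^\rho)(u+e^{-\rho})$ and partial fractions collapse it to $\rho/\sinh\rho$, and differentiating under the integral yields
\[
f_l(\sigma)=(l-1)!\int_0^\infty\frac{dt}{(\sigma+\cosh t)^{l}}>0,\qquad \frac{df_l}{d\sigma}=-f_{l+1}<0.
\]
This is a clean, self-contained route to the complete monotonicity of $f_1$ in $\sigma$, which is precisely what (1) asserts.
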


We will also need the following fact proved by C. Yu and F. Zhao.

\begin{lem}[{\cite[Proposition 3.1]{YZh}}] \label{YZhLem}
For all $l\geq 1$,
$$
\frac{d}{d\sigma}\left(-\frac{f_{l+1}}{f_l}\right)\geq 0.
$$
\end{lem}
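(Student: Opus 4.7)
The plan is to reduce the claim to a Tur\'an-type inequality $f_{l+1}^2 \leq f_l\,f_{l+2}$ and then extract that inequality from a Bernstein integral representation for $f_1$. First, the quotient rule together with the defining relation $df_l/d\sigma = -f_{l+1}$ gives
\[
\frac{d}{d\sigma}\left(-\frac{f_{l+1}}{f_l}\right) = \frac{f_l\,f_{l+2} - f_{l+1}^2}{f_l^2},
\]
so since $f_l > 0$, the claim is equivalent to the pointwise bound $f_{l+1}^2 \leq f_l\,f_{l+2}$ for every $\sigma > 1$.

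Next, I would observe that Lemma~\ref{DMLem}(1) is really a disguised statement of complete monotonicity for $f_1$ as a function of $\sigma$. Iterating $f_{l+1} = -df_l/d\sigma$ gives $(d/d\sigma)^k f_1 = (-1)^k f_{k+1}$, and Lemma~\ref{DMLem}(1) asserts $f_{k+1} > 0$ for all $k \geq 0$; hence $f_1$ is completely monotonic on $(1,\infty)$. By Bernstein's theorem (applied after the shift $\sigma \mapsto \sigma + 1$), there is a positive Borel measure $\mu$ on $[0,\infty)$ with
\[
f_1(\sigma) = \int_0^\infty e^{-(\sigma-1)s}\,d\mu(s)\quad\text{for } \sigma > 1,
\]
and differentiating under the integral gives
\[
f_{l+1}(\sigma) = \int_0^\infty s^l e^{-(\sigma-1)s}\,d\mu(s)
\]
for every $l \geq 0$.

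The Tur\'an inequality is then a one-line Cauchy--Schwarz on the measure $d\mu$: splitting the integrand as $s^l e^{-(\sigma-1)s} = \bigl(s^{(l-1)/2}e^{-(\sigma-1)s/2}\bigr)\bigl(s^{(l+1)/2}e^{-(\sigma-1)s/2}\bigr)$ yields
\[
f_{l+1}(\sigma)^2 \leq \left(\int_0^\infty s^{l-1}e^{-(\sigma-1)s}\,d\mu(s)\right)\left(\int_0^\infty s^{l+1}e^{-(\sigma-1)s}\,d\mu(s)\right) = f_l(\sigma)\,f_{l+2}(\sigma),
\]
which is exactly the required inequality. The only real obstacle in this plan is recognizing that the mere positivity asserted by Lemma~\ref{DMLem}(1) is exactly complete monotonicity of $f_1$ in disguise; once that identification is made, Bernstein's representation together with a single Cauchy--Schwarz estimate finishes everything.
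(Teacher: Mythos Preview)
The paper does not supply its own proof of this lemma; it simply cites \cite[Proposition 3.1]{YZh}. So there is no in-paper argument to compare against, and the relevant question is whether your proof stands on its own.

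It does. The reduction to the Tur\'an inequality $f_{l+1}^2\le f_l f_{l+2}$ via the quotient rule is immediate, and your key observation---that Lemma~\ref{DMLem}(1) is precisely the statement that $(-1)^k\,d^kf_1/d\sigma^k=f_{k+1}>0$ for all $k\ge 0$, i.e.\ complete monotonicity of $f_1$ on $(1,\infty)$---is exactly right. Bernstein--Widder then furnishes the Laplace representation, and Cauchy--Schwarz on the moment integrals is the standard route to log-convexity of moments, which is your Tur\'an inequality. The only step you leave implicit is differentiation under the integral, but this is routine: for $\sigma>1$ pick $\sigma'\in(1,\sigma)$ and bound $s^l e^{-(\sigma-1)s}\le C_{l,\sigma-\sigma'}\,e^{-(\sigma'-1)s}$, the right-hand side being $\mu$-integrable since $f_1(\sigma')<\infty$. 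Your argument is thus a clean, self-contained proof that relies only on Lemma~\ref{DMLem}(1) from within the paper.
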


We are now ready to prove Theorem \ref{MainThm}.

\begin{proof}[Proof of Theorem \ref{MainThm}]
By \cite[Proposition 2.1]{Bernstein}, it is sufficient to prove the claim for odd $n\geq 3$. To that end, suppose $n=2m+1$ for some $m\geq 1$. We first compute $\partial_\rho \log K_n(t,\rho)$:
\[
\partial_\rho \log K_n
= \frac{\partial_\rho K_n}{K_n}
\]
and
\[
\partial_\rho K_n = (4\pi t)^{-\frac{n}{2}} e^{-\frac{(n-1)^2}{4}t}
\left( -\frac{\rho}{2t} \alpha_n + \partial_\rho \alpha_n \right) e^{-\frac{\rho^2}{4t}}.
\]
Thus
\[
\partial_\rho \log K_n
= \frac{ -\frac{\rho}{2t} \alpha_n + \partial_\rho \alpha_n }{\alpha_n}
= - \frac{\rho}{2t} +  \partial_\rho \log \alpha_n.
\]
Then we differentiate the above identity with respect to $\rho$:
\begin{align*}
\partial_\rho^2 \log K_n 
& = - \frac{1}{2t} + \partial_\rho^2 \log \alpha_n
\end{align*}
Thus, using the chain rule,
\begin{align*}
\sinh^2(\rho) \partial_\sigma^2 \log K_n
& = \partial_\rho^2 \log K_n - \coth(\rho) \partial_\rho \log K_n\\
& = \left( - \frac{1}{2t} + \partial_\rho^2 \log \alpha_n \right)
- \coth(\rho) \left( -\frac{\rho}{2t} + \partial_\rho \log \alpha_n \right)\\
& = \frac{\rho \coth(\rho)-1}{2t} + \left( \partial_\rho^2 \log \alpha_n - \coth(\rho) \partial_\rho \log \alpha_n \right) \\
& = \frac{\rho \coth(\rho)-1}{2t} + \sinh^2(\rho)  \partial_\sigma^2 \log \alpha_n.
\end{align*}
Using $x > \tanh(x)$ for $x>0$, it is easy to see that the first term is always positive and independent of $n$. Therefore, it suffices to show that 
\[
\partial_\sigma^2 \log \alpha_n  \geq 0
\]
for $n = 2m+1$. Since 
\[ 
\partial_\sigma^2 \log \alpha_n = \frac{(\partial_\sigma^2\alpha_n) \alpha_n - (\partial_\sigma\alpha_n)^2 }{ \alpha_n^2 }
\]
it would be sufficient if we proved the following claim:
\[
A_n \stackrel{\rm def}{=}  (\partial_\sigma^2\alpha_n) \alpha_n - (\partial_\sigma\alpha_n)^2 \ge 0 \quad \mbox{for $n=2m+1$.}
\]
To see this, we need to use Lemma \ref{DMLem} and Lemma \ref{YZhLem} to compute the $\sigma$-derivatives of $\alpha_n$ where $n=2m+1$ for some $m\ge 1$. Since
\begin{align*}
\alpha_{2m+1} = \sum_{i=0}^{m-1} t^i P_{m,i}(f_1,\cdots,f_m)
\end{align*}
it follows that
\begin{align*}
A_{2m+1} 
& = \left[ 
\sum_{i=0}^{m-1} t^i P_{m,i}
\right] \left[
\sum_{i=0}^{m-1} t^i \frac{d^2P_{m,i}}{d\sigma^2} 
\right] - \left[
\sum_{i=0}^{m-1} t^i \frac{dP_{m,i}}{d\sigma}\right]^2\\
& = \sum_{i=0}^{2m-2} t^i 
\sum_{\substack{\alpha+\beta=i \\ 0\leq \alpha,\beta \leq m-1}}\left[
P_{m,\alpha}\frac{d^2P_{m,\beta}}{d\sigma}-\frac{dP_{m,\alpha}}{d\sigma} \frac{dP_{m,\beta}}{d\sigma}
\right].
\end{align*}
To show $A_{2m+1}\ge 0$, it is sufficient to show that for each $0 \le i \le 2m-2$
\[
B_{m,i} \stackrel{\rm def}{=} 
\sum_{\substack{\alpha+\beta=i \\ 0\leq \alpha,\beta\leq m-1}} \frac{d^2P_{m,\beta}}{d\sigma}-\frac{dP_{m,\alpha}}{d\sigma} \frac{dP_{m,\beta}}{d\sigma} \ge 0.
\]
By Lemma \ref{DMLem}, we know that $P_{m,r}(y_1,\cdots,y_m)$ is a polynomial in $y_1,\cdots,y_m$ with nonnegative coefficients, so we can assume that 
\[
P_{m,r}(y_1,\cdots,y_m) 
= \sum_{j_1,\cdots,j_m\geq 0} a_{m,r,j_1\cdots j_m} y_1^{j_1} \cdots y_m^{j_m}
\]
where all $a_{m,r,j_1\cdots j_m}\geq 0$ with only finitely many nonzero.
Then, applying chain rule, one gets
\begin{align*}
\frac{dP_{m,r}}{d\sigma}
& = \sum_{j_1,\cdots,j_m} a_{m,r,j_1\cdots j_m\geq 0}
\sum_{s=1}^{m} f_1^{j_1} \cdots \left(j_s f_s^{j_s-1} \frac{df_s}{d\sigma}\right) \cdots f_m^{j_m}\\
& = \sum_{j_1,\cdots,j_m\geq 0} a_{m,r,j_1\cdots j_m}
f_1^{j_1} \cdots f_m^{j_m}
\left( \sum_{s=1}^{m}  -j_s \frac{f_{s+1}}{f_s} \right) 
\end{align*}
and from $P_{m,r} \geq 0$ and Lemma \ref{YZhLem}, one gets
\begin{align*}
\frac{d^2P_{m,r}}{d\sigma^2}
& = \sum_{j_1,\cdots,j_m\geq 0} a_{m,r,j_1\cdots j_m}
f_1^{j_1} \cdots f_m^{j_m} \left( \sum_{s=1}^{m}  -j_s \frac{f_{s+1}}{f_s} \right)^2\\
& \quad + \sum_{j_1,\cdots,j_m\geq 0} a_{m,r,j_1\cdots j_m}
f_1^{j_1} \cdots f_m^{j_m} \frac{d}{d\sigma}\left( \sum_{s=1}^{m}  -j_s \frac{f_{s+1}}{f_s} \right)\\
& \ge \sum_{j_1,\cdots,j_m\geq 0} a_{m,r,j_1\cdots j_m}
f_1^{j_1} \cdots f_m^{j_m} \left( \sum_{s=1}^{m} -j_s\frac{f_{s+1}}{f_s} \right)^2.
\end{align*}
Now, we realize that we can symmetrize the expression of $B_{m,i}$:
\begin{align*}
2 B_{m.i}
& = \sum_{\substack{\alpha+\beta = i \\ 0\leq \alpha,\beta\leq m-1}} P_{m,\alpha} \frac{d^2P_{m,\beta}}{d\sigma^2}
+ P_{m,\beta}\frac{d^2P_{m,\alpha}}{d\sigma^2}
- 2\frac{dP_{m,\alpha}}{d\sigma} \frac{dP_{m,\beta}}{d\sigma}
\end{align*}
and from the previous computation we know that
\begin{align*}
& C_{m,i,\alpha,\beta} \stackrel{\rm def}{=}
P_{m,\alpha} \frac{d^2P_{m,\beta}}{d\sigma^2}
+ P_{m,\beta}\frac{d^2P_{m,\alpha}}{d\sigma^2}
- 2\frac{dP_{m,\alpha}}{d\sigma} \frac{dP_{m,\beta}}{d\sigma}
\\
& \ge 
\left( \sum_{j_1,\cdots,j_m\geq 0} a_{m,\alpha,j_1\cdots j_m}
f_1^{j_1} \cdots f_m^{j_m} \right)
\left( \sum_{k_1,\cdots,k_m\geq 0} a_{m,\beta,k_1\cdots k_m}
f_1^{k_1} \cdots f_m^{k_m} \left( \sum_{s=1}^{m}  -k_s \frac{f_{s+1}}{f_s} \right)^2 \right)\\
& \quad + \left( \sum_{j_1,\cdots,j_m\geq 0} a_{m,\alpha,j_1\cdots j_m}
f_1^{j_1} \cdots f_m^{j_m}
\left( \sum_{s=1}^{m}  -j_s \frac{f_{s+1}}{f_s} \right)^2  \right)
\left( \sum_{k_1,\cdots,k_m\geq 0} a_{m,\beta,k_1\cdots k_m}
f_1^{k_1} \cdots f_m^{k_m} \right)\\
& \quad - 2 \left( \sum_{j_1,\cdots,j_m\geq 0} a_{m,\alpha,j_1\cdots j_m} f_1^{j_1} \cdots f_m^{j_m}
\left( \sum_{s=1}^{m}  -j_s \frac{f_{s+1}}{f_s} \right) \right)\\
&\quad\quad\quad\quad\times \left( \sum_{k_1,\cdots,k_m\geq 0} a_{m,\beta,k_1\cdots k_m}
f_1^{k_1} \cdots f_m^{k_m} \left( \sum_{s=1}^{m}  -k_s \frac{f_{s+1}}{f_s} \right) \right)\\
& \ge 
\sum_{j_1,\cdots,j_m\geq 0} \sum_{k_1,\cdots,k_m\geq 0}
a_{m,\alpha ,j_1\cdots j_m} a_{m,\beta,k_1\cdots k_m}
f_1^{j_1+k_1} \cdots f_m^{j_m+k_m}\\
&\quad\times \left[
\left( \sum_{s=1}^{m}  -j_s \frac{f_{s+1}}{f_s} \right)^2
+ \left( \sum_{s=1}^{m}  -k_s \frac{f_{s+1}}{f_s} \right)^2
- 2 \left( \sum_{s=1}^{m}  -j_s \frac{f_{s+1}}{f_s} \right)
\left( \sum_{s=1}^{m}  -k_s \frac{f_{s+1}}{f_s} \right)
\right].
\end{align*}
Therefore, by completing squares and the facts that $f_s>0$ and $a_{m,\alpha,j_1\cdots j_m}\geq 0$ and $a_{m,\beta,k_1\cdots k_m}\geq 0$, we conclude $C_{m,i,\alpha,\beta}\geq 0$ and prove the claim.
\end{proof}

\end{document}